\newcommand{\N}{\mathcal{N}}
\newtheorem{theorem}{Theorem}[section]
\newtheorem{proposition}[theorem]{Proposition}
\newtheorem{lemma}[theorem]{Lemma}
\newtheorem{corollary}[theorem]{Corollary}
\theoremstyle{remark}
\theoremstyle{definition}
\newtheorem{definition}{Definition}
\DeclareMathOperator{\Lc}{\mathcal{L}}
\DeclareMathOperator{\real}{\mathbb{R}}
\DeclareMathOperator{\softp}{\operatorname{softplus}}
\DeclareMathOperator{\proj}{\operatorname{proj}_{\Omega_m}}
\DeclareMathOperator{\projx}{\operatorname{proj}_{\Omega_n}}
\newcommand{\longthmtitle}[1]{\mbox{} \textit{(#1):}}
\title{Accelerated Algorithms for a Class of Optimization Problems with Constraints
\thanks{The first author was supported by a Siemens Fellowship. This work was supported in part by Boeing Strategic University Initiative.}}
\author{Anjali Parashar \quad  Priyank Srivastava \quad Anuradha M. Annaswamy
\thanks{A. Parashar, P. Srivastava and A. M. Annaswamy are with the Department of Mechanical Engineering, Massachusetts Institue of Technology.
Email:\texttt{\{anjalip,psrivast,aanna\}@mit.edu}.
}
}
\begin{document}
\maketitle

\begin{abstract}
    This paper presents a framework to 
    solve constrained optimization problems in an accelerated manner
    based on High-Order Tuners (HT).
    Our approach is based on reformulating the original constrained problem 
    as the unconstrained optimization of a loss function.
    We start with convex optimization problems and identify the conditions under which the loss function is convex.
    Building on the insight that the loss function could be convex even if the original optimization problem is not,
    we extend our approach to a class of nonconvex optimization problems.
    The use of a HT together with this approach enables us to achieve a convergence rate better than state-of-the-art gradient-based methods.
    Moreover, for equality-constrained optimization problems,
    the proposed method ensures that the state remains feasible throughout the evolution,
    regardless of the convexity of the original problem.
\end{abstract}
\thispagestyle{empty}

\section{Introduction}
Several problems in a wide range of fields such as adaptive control, machine learning, and optimization  \cite{Narendra_1989, SB-LV:09, hazan2019introduction, Nesterov2018LecturesOptimization} utilize gradient-descent based approaches. In adaptive control, decision making in the form of reducing a performance error as well ensuring that the learning error in parameters reduces to zero  as well as learning the  unknown parameters of the system, both in realtime are the underlying goals. A gradient-descent approach is often employed to realize both goals, first to obtain a fast convergence of the performance error and then, to reduce the learning error. In machine learning, fast and correct training of models such as neural networks is sought after, which necessitates the reduction of an underlying loss function using a gradient-based approach.  Optimization approaches require the solution of an augmented Lagrangian in an expedient manner, through a gradient-descent method. All of these solutions have to be achieved often in the presence of various constraints. Conservation equations invariably introduces equality constraints in the form of momentum, mass, or energy balance, while capacity limits introduce inequality constraints. Given the importance of the fast convergence in all these problems, there is a need for algorithms that can lead to an order of magnitude improvement in the speed of convergence, both performance and learning errors, while retaining stability. This paper proposes such an algorithm.

Recently, a class of High-order Tuners (HT) was proposed in continuous-time \cite{Gaudio2020ASystems} and in discrete-time~\cite{Gaudio2020AcceleratedRegressors} for a large class of dynamic systems for the purpose of estimation of unknown parameters.  The estimation problem for this class of systems can be reformulated as a linear regression problem, where the underlying regressors correspond to various system variables that can be measured, including inputs, outputs, and states. Each of these high-order tuners was shown to result in a stable performance error when the regressors were time-varying. One of these tuners was extended in \cite{boffi2020implicit} for a class of nonlinear problems where the underlying error model is still based on linear regression. The motivation for these methods come from adaptive control perspectives in \cite{morse1992high,evesque2003adaptive} and optimization perspectives \cite{YEN:83,Nesterov2018LecturesOptimization}.
In~\cite{Jose2021-LCSS}, these results were extended to convex functions, and shown that the high-order tuners are capable of leading to stable performance with a potential for accelerated convergence of the performance error to zero. 
However, all the aforementioned works focus on optimization problems without constraints.
The work~\cite{PS-JC:21-csl} proposes a framework to employ Nesterov's accelerated algorithm 
for equality-constrained convex optimization problems.

In this paper, we extend the results of  HT in \cite{Gaudio2020AcceleratedRegressors} and~\cite{Jose2021-LCSS} for optimization in the presence of both equality and inequality constraints. 
For equality-constrained optimization problems, we show both for convex as well as nonconvex settings, 
that HT-based algorithms can be derived and shown to lead to a stable behavior while guaranteeing the feasibility at all times.
Our solution strategy is based on reformulating the original problem as the unconstrained optimization of a loss function proposed in~\cite{PLD-DR-JZK:21}
and identifying the conditions under which the loss function is convex as well as strongly convex.
We show how these algorithms can be extended to the case when inequality constraints are present as well. 
Conditions under which stable behavior can be guaranteed are clearly delineated in all cases. 

The organization of the paper is as follows. 
Section~\ref{sec:problem} outlines the problem statement and the approach we adopt to find its solution,
along with a few preliminaries on convex analysis and constrained optimization.
In Section~\ref{sec:convex}, we first propose a HT-based algorithm for equality-constrained convex optimization problems 
and based on that, generalize our treatment to general convex problems involving inequality constraints.
We then extend our approach for convex optimization to a class of nonconvex problems in Section~\ref{sec:nonlinear}. 
In Section~\ref{sec:conclusions}, we summarize the main contributions of the paper and outline a few future research directions.

\section{Problem Statement}\label{sec:problem}
We consider optimization problems of the form 
\begin{equation}\label{eq:problem}
    \begin{aligned}
        \min \ &f(x) \\
        \text{s.t.} \enspace & h(x) = 0 \\
        & g(x) \le 0,
    \end{aligned}
\end{equation}
where $x \in \real^n$ is the decision variable,
$f:\real^n \to \real$, $h : \real^n \to \real^m$, and $g:\real^n \to \real^p$ are continuously differentiable functions.
Without loss of generality, we assume that problem~\eqref{eq:problem} is not overdetermined, i.e.,  $m \le n$.

Our aim is to design recursive algorithms that solve~\eqref{eq:problem}, exhibiting accelerated convergence as proposed by Nesterov for unconstrained convex optimization, cf.~\cite{YEN:83}. Our approach is based on reformulating the original problem as the unconstrained optimization of a loss function and then using high-order tuners for accelerated convergence, cf.~\cite{Jose2021-LCSS}.
To satisfy constraints closely during the evolution, we employ a constraint-completion and correction procedure described below.
As will become apparent, our approach will rely on ensuring that an underlying loss function is convex. We first show that this is indeed the case if the optimization problem is convex 
in Section~\ref{sec:convex}. We then generalize our treatment to a class of nonconvex optimization problems
in Section~\ref{sec:nonlinear}.

Before proceeding to the technical content of the paper,
we present our notational conventions and review basic concepts from convex analysis and constrained optimization below.

\subsection*{Notation}
Let $\real$ denote the set of real numbers. $\|.\|$ denotes the 2-norm of a vector or matrix. For a continuously differentiable function $f: \real^n \to \real$, $\nabla f$ denotes its gradient. 
$(.)^T$ denotes the transpose of a vector or matrix.
For vectors $x,y \in \real^n$, $x \ge y$ implies that the inequality holds elementwise.
For a vector $x \in \real^n$, with $1\le i<j \le n$, $x_{i:j}$ denotes the subvector with elements from the $i$-th entry of $x$ to the $j$-th entry.

\subsection*{Convex analysis}
Here we present the basics of convex analysis following~\cite{RTR:70,Nesterov2018LecturesOptimization}.
A set $\Omega \in \real^n$ is convex if for all $x, y \in \Omega$, $\lambda x + (1-\lambda) y \in \Omega$ for all $\lambda \in (0,1)$.
\begin{definition}\longthmtitle{Convex functions}
A  function $f:\real^n \to \real$ is convex on a convex set $\Omega$ if for all $x,y \in \Omega$,
\begin{equation*}
    f(\lambda x + (1-\lambda)y)\leq \lambda f(x)+ (1-\lambda) f(y),
\end{equation*}
for all $\lambda \in (0,1)$.
\end{definition}
\begin{definition}\longthmtitle{Strongly convex functions}
A continuously differentiable function $f$ is $\mu$-strongly convex on $\Omega$ if there exists a $\mu >0$ such that for all $x, y\in\Omega$,
\begin{equation*}
    f(\lambda x+ (1-\lambda)y)\le \lambda f(x)+ (1-\lambda) f(y) - \dfrac{1}{2}\mu \lambda (1-\lambda)\|x-y\|^2,  	
\end{equation*}
for all $\lambda \in (0,1)$.
\end{definition}
\begin{definition}\longthmtitle{$\overline{L}$-smooth convex functions}
A continuously differentiable function $f$ is $\overline{L}$-smooth convex on $\Omega$ if it is convex and there exists an upper bound $\overline{L} >0$
on the Lipschitz constant of its gradient, i.e., 
\begin{equation*}
   \dfrac{\|\nabla f(x) - \nabla f(y)\|}{\|x-y\|} \le \overline{L},  
\end{equation*}
for all $x, y\in \Omega$.
\end{definition}
A similar definition could be stated for $\overline{L}$-smooth strongly convex functions
but we omit it here to avoid repetition.

\subsection*{Nonlinear optimization via constraint-completion and correction}
Consider the nonlinear optimization problem~\eqref{eq:problem}
and define a loss function $\Lc:\real^n \to \real$, consisting of the original objective function and soft loss terms penalizing the constraint violation as
\begin{equation*}
    \Lc(x) = f(x) + \lambda_h\|h(x)\|^2 + \lambda_g\|\softp(g(x))\|^2,
\end{equation*}
where $\lambda_h, \lambda_g > 0$ are design parameters. 
Given $y \in \real^p$, the function $\softp: \real^p \to \real^p$ is
defined as
\begin{equation*}
    \softp(y) = \log(1+e^{y}),
\end{equation*}
and serves as a smooth approximation to ReLU to ensure that  $\Lc$ is continuously differentiable. 
We employ a constraint-completion and correction approach to
leverage the fact that equality constraints introduce linear dependencies in the feasible solution space, as in~\cite{PLD-DR-JZK:21}. 
Building on this insight and assuming that problem~\eqref{eq:problem} is not overdetermined, 
$x$ is partitioned into an independent variable $\theta \in \real^m$ and a dependent variable $z \in \real^{n-m}$, 
\begin{equation*}
    x = [\theta^T \quad z^T]^T .
\end{equation*}  
We assume that $h$ is such that given $m$ entries of $x$, its remaining $(n-m)$ entries can be computed either in closed form or recursively.
In other words, we assume that we have knowledge of the function \(p:\mathbb{R}^m \rightarrow \mathbb{R}^{n-m}\)  such that 
\begin{equation*}
    h([\theta^T \quad p(\theta)^T]^T) = 0 ,
\end{equation*} 
holds for all \(\theta \in\mathbb{R}^m\). 
For all the points where $\frac{\partial h}{\partial z} \neq 0$,
existence and uniqueness of $p$ is guaranteed from the Implicit Function theorem.
%
The reduction of variable dimension as explained above ensures that equality constraints are always satisfied. 

Using the function $p(\cdot)$ defined as above, we now define a modified loss function $l:\mathbb{R}^m \rightarrow \mathbb{R}$ as
\begin{equation*}
    l(\theta) = \mathcal{L}([\theta^T \quad p(\theta)^T]^T).
\end{equation*}
The optimization problem in~\eqref{eq:problem} is now reformulated as
an unconstrained minimization problem given by
\begin{equation*}
    \min l(\theta),
\end{equation*}
with $\theta \in \mathbb{R}^m$ as the decision variable.
Depending on the information about the mapping $p$, 
gradient of the modified loss function could be computed either explicitly or using the Implicit Function theorem as in~\cite{BA-JZK:17}.

The above completion procedure takes care of the equality constraints. However, there is no guarantee associated with the satisfaction of inequality constraints. 
As will be shown later, we employ a gradient-based correction procedure that employs penalty terms using the $\softp$ function above and corrections that allow the solution to approach the feasible region along the manifold of points satisfying the equality constraints.

\section{Convex Optimization Problems}\label{sec:convex}
In this section, we consider convex optimization problems 
in the general form
\begin{equation}\label{eq:problem-convex}
    \begin{aligned}
        \min \ &f(x) \\
        \text{s.t.} \enspace &Ax = b \\
        & g(x) \le 0,
    \end{aligned}
\end{equation}
where $x \in \real^n$ is the decision variable,
$f:\real^n \to \real$ and $g:\real^n \to \real^p$ are continuously differentiable (strongly) convex functions, $A \in \real^{m \times n}$ and $b \in \real^m$. 
We start with problems involving just the equality constraints and then generalize our approach to problems involving both equality and inequality constraints.

\subsection{Equality-constrained convex optimization}\label{sec:equality}
Equality-constrained convex optimization problems have the general structure
\begin{equation}\label{eq:problem-equality}
    \begin{aligned}
        \min \ &f(x) \\
        \text{s.t.} \enspace & Ax = b. 
    \end{aligned}
\end{equation}
With $\lambda_h>0$, the loss function $\Lc$ and the modified loss function $l$ take the following forms:
\begin{subequations}\label{eq:losses-equality}
\begin{align}\label{eq:loss-equality}
 \Lc(x) =& f(x) + \lambda_h\|A x -b \|^2, \\
    l(\theta) =& f([\theta^T \; p(\theta)^T]^T).  \label{eq:mloss-equality}
\end{align}
\end{subequations}
Here we have used the fact that $\lambda_h\|A [\theta^T \; p(\theta)^T]^T -b \|^2=0$.
We note that $p$ is an affine function of $\theta$. Let
\begin{equation}\label{eq:p}
    p(\theta)=P\theta+q,
\end{equation}
where $P \in \real^{(n-m) \times m}$ and $q \in \real^{n-m}$.
$\Lc$ is convex by construction.
We characterize the convexity properties of $l$ in the following result.

\begin{proposition} \label{prop:linear}
\longthmtitle{Convexity of the modified loss function for equality-constrained convex programs}
For the equality-constrained convex optimization problem~\eqref{eq:problem-equality}, assume $f$ is a $\overline{L}$-smooth convex function, and let 
\begin{align}\label{eq:M}
    \overline{M}=\sqrt{1+\|P\|}\overline{L}.
\end{align}
Then $l$ is $\overline{M}$-smooth convex.
\end{proposition}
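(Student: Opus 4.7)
My plan is to exploit the fact that, since $p(\theta)=P\theta+q$ is affine, the augmentation map $T:\real^m\to\real^n$ defined by $T(\theta) := [\theta^T \; p(\theta)^T]^T$ is itself affine with constant Jacobian
\begin{equation*}
J=\begin{pmatrix} I_m \\ P \end{pmatrix}\in\real^{n\times m},\qquad T(\theta)=J\theta+\begin{pmatrix}0\\ q\end{pmatrix}.
\end{equation*}
Then $l = f\circ T$, and since the composition of a convex function with an affine map is convex, convexity of $l$ is immediate from the convexity of $f$. So only the smoothness bound requires real work.

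For the smoothness claim, the chain rule gives $\nabla l(\theta) = J^T \nabla f(T(\theta))$, so I would estimate, using submultiplicativity of the operator norm and $\overline{L}$-smoothness of $f$,
\begin{equation*}
\|\nabla l(\theta_1)-\nabla l(\theta_2)\| \le \|J^T\|\,\|\nabla f(T(\theta_1))-\nabla f(T(\theta_2))\| \le \|J^T\|\,\overline{L}\,\|T(\theta_1)-T(\theta_2)\|.
\end{equation*}
Combining with $\|T(\theta_1)-T(\theta_2)\|=\|J(\theta_1-\theta_2)\|\le \|J\|\,\|\theta_1-\theta_2\|$ and $\|J^T\|=\|J\|$ delivers Lipschitz constant $\|J\|^2\,\overline{L}$. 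The final step is to bound $\|J\|$ in terms of $\|P\|$ by writing $\|Jv\|^2=\|v\|^2+\|Pv\|^2\le (1+\|P\|^2)\|v\|^2$, hence $\|J\|\le\sqrt{1+\|P\|^2}$.

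The main obstacle is reconciling the clean bound this route produces, namely $(1+\|P\|^2)\overline{L}$, with the specific constant $\overline{M}=\sqrt{1+\|P\|}\,\overline{L}$ asserted in~\eqref{eq:M}. The naive operator-norm estimate does not match the displayed $\overline{M}$; to recover a constant of that form one would presumably need a sharper argument, for instance working directly with $\nabla^2 l(\theta)=J^T\nabla^2 f(T(\theta))J$ and using a restricted spectral bound for $\nabla^2 f$ on the range of $J$, or exploiting additional structure of $\nabla f$ that I do not see stated. My tentative plan is therefore to present the cleanest self-consistent derivation yielding $(1+\|P\|^2)\overline{L}$ (which is the natural Lipschitz constant of $\nabla l$) and flag the mismatch with the stated $\overline{M}$, since the functional dependence of $\overline{M}$ on $\|P\|$ and $\overline{L}$ is what ultimately matters for the downstream convergence results.
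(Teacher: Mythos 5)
Your derivation is sound, and your suspicion about the constant is justified rather than a defect of your argument. The paper's own proof takes essentially the same route as yours --- chain rule, then a Lipschitz estimate --- but executes it componentwise: it writes $\nabla l = \nabla f_{1:m} + P^T\nabla f_{m+1:n}$, applies the triangle inequality to get the factor $(1+\|P\|)$ in the numerator of~\eqref{eq:smooth-first}, and then converts $\|\theta_1-\theta_2\|$ to $\|x_1-x_2\|$ via~\eqref{eq:theta-x-norm}. Your version with $J = [I_m^T \; P^T]^T$ and $\nabla l = J^T\nabla f(T(\theta))$ is the cleaner and sharper form of the same idea: it avoids the lossy triangle-inequality split and uses the exact identity $\|J\|^2 = \lambda_{\max}(I+P^TP) = 1+\|P\|^2$.

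The mismatch you flag is a genuine error in the stated $\overline{M}$, not something a sharper argument can repair. Two things go wrong in the paper's own chain of inequalities. First, the step from $\|\theta_1-\theta_2\|^2 + \|P\theta_1-P\theta_2\|^2 = \|x_1-x_2\|^2$ to~\eqref{eq:theta-x-norm} implicitly uses $\|P(\theta_1-\theta_2)\|^2 \le \|P\|\,\|\theta_1-\theta_2\|^2$, which holds only when $\|P\|\le 1$; the correct denominator is $\sqrt{1+\|P\|^2}$. Second, even granting~\eqref{eq:theta-x-norm}, substituting it into~\eqref{eq:smooth-first} yields $(1+\|P\|)^{3/2}\overline{L}$, not $\sqrt{1+\|P\|}\,\overline{L}$; a factor of $1+\|P\|$ is silently dropped in the last display. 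Your constant $(1+\|P\|^2)\overline{L} = \|J\|^2\overline{L}$ is in fact tight: for $f(x)=\tfrac{\overline{L}}{2}\|x\|^2$ one has $\nabla^2 l = \overline{L}(I+P^TP)$ with largest eigenvalue $(1+\|P\|^2)\overline{L}$, which exceeds $\sqrt{1+\|P\|}\,\overline{L}$ once $\|P\|$ exceeds the root of $2t+t^3=1$ (about $0.45$). So the proposition is true with $\overline{M}=(1+\|P\|^2)\overline{L}$ but false as stated for general $P$; since the downstream results (Theorem~\ref{thm:equality-cvx} and the choice of $\N_k$) only need \emph{some} finite smoothness constant, the fix is harmless, but the displayed~\eqref{eq:M} should be corrected. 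Your proposal, presented with the constant $(1+\|P\|^2)\overline{L}$, is the correct proof.
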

\begin{proof}
Convexity of $l$ follows in a straightforward manner from the definitions of $l$ and $p$ in~\eqref{eq:mloss-equality} and~\eqref{eq:p} respectively, and the convexity of $f$. 
For the $\overline{M}$-smoothness, using the chain rule, we have
\begin{align*}
    \dfrac{d l}{d \theta} =& \dfrac{\partial l}{\partial \theta} + \dfrac{\partial l}{\partial p(\theta)} \dfrac{\partial p(\theta)}{\partial \theta}.
    \end{align*}
    Hence
    \begin{align*}
    \nabla l = \nabla f_{1:m} + P^T \nabla f_{m+1:n}.
    \end{align*}
    Let us consider the gradient of $l$ at $\theta_1, \theta_2 \in \real^m$ and examine the Lipschitz constant of $l$.
    \begin{align*}
     \dfrac{   \|\nabla l(\theta_1) - \nabla l(\theta_2)\|}{\|\theta_1-\theta_2\|}
   \le &  \dfrac{\|\nabla f_{1:m} (\theta_1) - \nabla f_{1:m}(\theta_2)\|}{\|\theta_1-\theta_2\|} \\ 
   + &\dfrac{\|P^T (\nabla f_{m+1:n} (\theta_1) - \nabla f_{m+1:n} (\theta_2))\|}{\|\theta_1-\theta_2\|}
    \end{align*}
    Since $\| \nabla f_{i:j} \| \le \| \nabla f \|$ for all $i,j$, we have
    \begin{align}\label{eq:smooth-first}
       \dfrac{   \|\nabla l(\theta_1) - \nabla l(\theta_2)\|}{\|\theta_1-\theta_2\|} \le \dfrac{(1+\|P\|) \|\nabla f(x_1) - \nabla f(x_2)\|}{\|\theta_1-\theta_2\|}
    \end{align}
    To write the denominator of the above expression in terms of $x_1$ and $x_2$, remember that
    \begin{align*}
        \|\theta_1-\theta_2\|^2 + \|P\theta_1 - P \theta_2 \|^2 = \|x_1-x_2 \|^2.
    \end{align*}
    Using properties of the norm,
    \begin{align}
        \|\theta_1-\theta_2\|^2 + \|P\| \|\theta_1 - P \theta_2 \|^2 \ge & \|x_1-x_2 \|^2  \notag \\
        \|\theta_1-\theta_2\| \ge \dfrac{\|x_1-x_2\|}{\sqrt{1+\|P\|}}. \label{eq:theta-x-norm}
    \end{align}
    Using~\eqref{eq:smooth-first} and~\eqref{eq:theta-x-norm}, we have
    \begin{align*}
        \dfrac{\|\nabla l(\theta_1) - \nabla l(\theta_2)\|}{\|\theta_1-\theta_2\|} \le & \dfrac{\sqrt{1+\|P\|} \|\nabla f(x_1) - \nabla f(x_2)\|}{\|x_1-x_2\|} \\
     \le & \sqrt{1+\|P\|} \overline{L},
    \end{align*}
    where the last inequality follows from the $\overline{L}$-smoothness property of $f$. 
\end{proof}
The next result extends Proposition~\ref{prop:linear} to the case of strongly convex functions.
\begin{corollary}
\label{co:linear}
\longthmtitle{Strong convexity of the modified loss function for equality-constrained convex programs}
For the equality-constrained convex optimization problem~\eqref{eq:problem-equality}, assume $f$ is a $\overline{L}$-smooth and $\mu$-strongly convex function. 
Then $l$ is $\overline{M}$-smooth and $\mu$-strongly convex, where $\overline{M}$ is defined in equation~\eqref{eq:M}.
\end{corollary}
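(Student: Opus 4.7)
My plan is to treat the two conclusions separately, and to observe that almost all of the work is already done. The $\overline{M}$-smoothness of $l$ is inherited verbatim from Proposition~\ref{prop:linear}: its proof only uses $\overline{L}$-smoothness of $f$ together with the affine form $p(\theta)=P\theta+q$, and strong convexity adds nothing to that argument. So I would state that part as immediate and move on.

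For the $\mu$-strong convexity of $l$, I would argue directly from the definition rather than through second derivatives. Set $x(\theta) = [\theta^T \; p(\theta)^T]^T = [\theta^T \; (P\theta+q)^T]^T$; because $p$ is affine in $\theta$ by~\eqref{eq:p}, the map $\theta \mapsto x(\theta)$ is affine, so
\begin{equation*}
x\bigl(\lambda \theta_1 + (1-\lambda)\theta_2\bigr) = \lambda x(\theta_1) + (1-\lambda) x(\theta_2).
\end{equation*}
Writing $x_i = x(\theta_i)$ and applying $\mu$-strong convexity of $f$ to $x_1,x_2$ yields
\begin{equation*}
l\bigl(\lambda \theta_1 + (1-\lambda)\theta_2\bigr) \le \lambda l(\theta_1) + (1-\lambda) l(\theta_2) - \tfrac{1}{2}\mu \lambda(1-\lambda)\|x_1-x_2\|^2.
\end{equation*}

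The final ingredient is the comparison between $\|x_1-x_2\|$ and $\|\theta_1-\theta_2\|$. Since $x_1 - x_2 = [(\theta_1-\theta_2)^T \; (P(\theta_1-\theta_2))^T]^T$, we get $\|x_1-x_2\|^2 = \|\theta_1-\theta_2\|^2 + \|P(\theta_1-\theta_2)\|^2 \ge \|\theta_1-\theta_2\|^2$, so the penalty term is at least $\tfrac{1}{2}\mu\lambda(1-\lambda)\|\theta_1-\theta_2\|^2$, giving $\mu$-strong convexity of $l$.

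I do not expect any real obstacle here: the content of the corollary is essentially that strong convexity is preserved under composition with a full-column-rank affine map, and the only slightly delicate point is reminding the reader that the affineness of $p$ is what makes the convex combination commute with the map $\theta \mapsto x(\theta)$. The proof can therefore be kept very short, citing Proposition~\ref{prop:linear} for the smoothness half and giving the three-line strong-convexity computation above.
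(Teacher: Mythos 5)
Your proposal is correct and follows essentially the same route as the paper: apply the definition of $\mu$-strong convexity of $f$ at the lifted points $x_i=[\theta_i^T \; p(\theta_i)^T]^T$, use the affineness of $p$ to identify the convex combination, bound $\|x_1-x_2\|\ge\|\theta_1-\theta_2\|$, and cite Proposition~\ref{prop:linear} for the $\overline{M}$-smoothness. Your write-up is in fact slightly more explicit than the paper's, which leaves the affineness step implicit in the phrase ``follows from the definition of $l$.''
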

\begin{proof}
Consider $\theta_1,\theta_2 \in \real^m$.
Now consider $x_1,x_2 \in \real^n$ defined as
$x_1=[\theta_1^T \;\; p(\theta_1)^T]^T$ and $x_1=[\theta_2^T \;\; p(\theta_2)^T]^T$.
Then from the properties of $f$, it follows that
\begin{align*}
    f(\lambda x_1+ (1-\lambda)x_2)\le & \lambda f(x_1)+ (1-\lambda) f(x_2) \\ &- \dfrac{1}{2}\mu \lambda (1-\lambda)\|x_1-x_2\|^2,  	
\end{align*}
for all $\lambda \in (0,1)$. Using the fact that $\|\theta_1-\theta_2\|^2 \le \|x_1-x_2\|^2$, we have
\begin{align*}
    f(\lambda x_1+ (1-\lambda)x_2)\le & \lambda f(x_1)+ (1-\lambda) f(x_2) \\ &- \dfrac{1}{2}\mu \lambda (1-\lambda)\|\theta_1-\theta_2\|^2.	
\end{align*}
Rest of the proof follows from the definition of $l$ and the proof of Proposition~\ref{prop:linear}. 
\end{proof}

Now that we have established the convexity and smoothness properties of the modified loss function $l$, 
we leverage the properties of high-order tuners~\cite{Gaudio2020AcceleratedRegressors,Jose2021-LCSS} to propose an accelerated algorithm to solve~\eqref{eq:problem-equality}.  
Let $\N_k$ be the normalizing signal defined as
\begin{equation*}
    \N_k = 1 + H_k,
\end{equation*} 
where
\begin{equation*}
    H_k = \max\{\zeta: \zeta \in \sigma(\nabla^2 l(\theta_k))\},
\end{equation*}
where $\sigma(\nabla^2 l(\theta_k))$ denotes the spectrum of the Hessian matrix of the loss function $l$ evaluated at $\theta=\theta_k$.
Note that it is also possible to make a more conservative selection for $\N_k$ such as $\overline{M}$, i.e., smoothness parameter of the loss function if accurate information about $\nabla^2 l$ is not available.
Next we introduce Algorithm 1 to solve problem~\eqref{eq:problem-equality}.
\begin{algorithm}
\caption{HT Optimizer for equality-constrained convex optimization}
\begin{algorithmic}[1] 
\STATE \textbf{Initial conditions} $\theta_0$, $\nu_0$, gains $\gamma$, $\beta$
\FOR{$k=0$ to $N$}
\STATE Compute $\nabla l(\theta)$ and let \(\mathcal{N}_k=1+H_k\)
\STATE $\nabla \overline{q}_k(\theta_k)= \dfrac{\nabla l(\theta_k)}{\mathcal{N}_k}$
\STATE $\overline{\theta}_k=\theta_k-\gamma\beta\nabla \overline{q}_k(\theta_k)$
\STATE $\theta_{k+1}\leftarrow \overline{\theta}_k-\beta(\overline{\theta}_k-\nu_k) $
\STATE Compute $\nabla l(\theta_{k+1})$ and let
\STATE $\nabla \overline{q}_k(\theta_{k+1})= \dfrac{\nabla l(\theta_{k+1})}{\mathcal{N}_k}$
\STATE $\nu_{k+1} \leftarrow \nu_k -\gamma\nabla \overline{q}_k(\theta_{k+1})$
\ENDFOR
\end{algorithmic}
\end{algorithm}

The following result formally characterizes the convergence properties of Algorithm 1.
\begin{theorem}\label{thm:equality-cvx}
\longthmtitle{Convergence of the HT algorithm for equality-constrained convex programs}
If the objective function $f$ is $\overline{L}$-smooth convex, then  
with \(0<\beta<1\) and \(0<\gamma<\frac{\beta(2-\beta)}{8+\beta}\),  
the sequence of iterates $\{\theta_k\}$ generated by Algorithm 1 satisfy $\underset{k\rightarrow\infty}{\lim}l(\theta_k) = l(\theta^*)$, 
where $l(\theta^*)=f([\theta^{*T} \quad p(\theta^*)^T]^T)$ is the optimal value of~\eqref{eq:problem-equality}. 
\end{theorem}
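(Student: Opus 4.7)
The plan is to treat Algorithm~1 as a high-order tuner applied to the unconstrained smooth convex program $\min_\theta l(\theta)$ and then adapt the Lyapunov-based convergence analysis of~\cite{Jose2021-LCSS}. By Proposition~\ref{prop:linear}, $l$ is $\overline{M}$-smooth convex on $\real^m$, and by construction every iterate $x_k = [\theta_k^T\; p(\theta_k)^T]^T$ satisfies $Ax_k = b$; consequently the minimizers of $l$ coincide with the optimal solutions of~\eqref{eq:problem-equality}, so showing $l(\theta_k) \to l(\theta^*)$ is sufficient.

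First I would interpret the recursion as a predictor-corrector momentum scheme in which $\theta_k$ is the primary state, $\nu_k$ is the slow auxiliary state, and the normalization $\mathcal{N}_k = 1 + H_k$ scales each gradient so that $\|\nabla \overline{q}_k\| = \|\nabla l(\theta_k)\|/\mathcal{N}_k$ is uniformly bounded by the local curvature. I would then introduce a Lyapunov candidate of the form
\begin{equation*}
V_k = \alpha_1\bigl(l(\theta_k) - l(\theta^*)\bigr) + \alpha_2\|\nu_k - \theta^*\|^2,
\end{equation*}
with constants $\alpha_1,\alpha_2 > 0$ chosen as in the HT literature. Using the descent lemma for $\overline{M}$-smooth functions, the convexity inequality $l(\theta_k) - l(\theta^*) \le \nabla l(\theta_k)^T(\theta_k - \theta^*)$, and substituting the explicit updates for $\overline{\theta}_k$, $\theta_{k+1}$, $\nu_{k+1}$, the cross-terms between $\theta_k$ and $\nu_k$ arising in $V_{k+1} - V_k$ should consolidate into a non-positive quantity of the form $-c\,\|\nabla l(\theta_k)\|^2/\mathcal{N}_k$ for some $c > 0$.

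The main obstacle will be pinning down the precise interplay between $\beta$ and $\gamma$ that renders this descent inequality valid. The bound $0 < \beta < 1$ and $0 < \gamma < \beta(2-\beta)/(8 + \beta)$ is exactly the algebraic condition under which the indefinite quadratic in $\|\nabla l(\theta_k)\|$ and $\|\nu_k - \theta^*\|$ appearing in $V_{k+1} - V_k$ becomes negative semidefinite; this is the same calculation carried out for a generic smooth convex function in~\cite{Jose2021-LCSS}, and the key point to verify is that the argument transfers verbatim once only $\overline{M}$-smooth convexity of $l$ is invoked, which is guaranteed by Proposition~\ref{prop:linear}.

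Once the Lyapunov decrease is established, telescoping yields $V_k \le V_0$, so $\{\nu_k\}$ is bounded and $\sum_k \|\nabla l(\theta_k)\|^2/\mathcal{N}_k < \infty$. Boundedness of $\{\theta_k\}$ then follows from the updates together with bounded $\{\nu_k\}$, which in turn keeps $\mathcal{N}_k$ bounded along the trajectory. Therefore $\liminf_k \|\nabla l(\theta_k)\| = 0$, and combining this with convexity of $l$ along a suitable subsequence gives $l(\theta_k) \to l(\theta^*)$, completing the proof.
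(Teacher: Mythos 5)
Your proposal is correct and follows essentially the same route as the paper: the paper's proof consists precisely of invoking Proposition~\ref{prop:linear} to conclude that $l$ is $\overline{M}$-smooth convex and then deferring entirely to Theorem~2 of~\cite{Jose2021-LCSS} for the convergence of the high-order tuner. Your additional sketch of the Lyapunov argument is a reasonable reconstruction of what that cited theorem establishes, but it is not part of the paper's own proof, which treats that machinery as a black box.
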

\begin{proof}
From Proposition~\ref{prop:linear}, $\overline{L}$-smoothness of the objective function $f$ implies that the loss function $l$
is $\overline{M}$-smooth and convex.
Rest of the proof follows from~\cite[Theorem 2]{Jose2021-LCSS}.
\end{proof}
Following Corollary~\ref{co:linear} and~\cite[Theorem~3]{Jose2021-LCSS}, a similar result exists for the strongly convex case as well.

\subsection{Convex optimization problems with equality \& inequality constraints}
Here we extend our approach to solve general convex optimization problems involving equality as well as the inequality constraints in the form~\eqref{eq:problem-convex}.
Redefine the loss function and the modified loss functions~\eqref{eq:losses-equality} 
by including a penalty term corresponding to the inequality constraints violation as
\begin{subequations}
\begin{align}
    \mathcal{L}(x) =& f(x) + \lambda_h\|Ax-b\|^2 + \lambda_g \|\softp(g(x))\|^2, \\
    l(\theta) =& f([\theta^T \; p(\theta)^T]^T) + \lambda_g \|\softp(g([\theta^T \; p(\theta)^T])^T)\|^2. \label{eq:mloss-inequality}
\end{align}
\end{subequations}
Since the penalty term corresponding to the inequality constraints is convex,
it follows from Proposition~\ref{prop:linear} that the modified loss function~\eqref{eq:mloss-inequality} is convex.

Our approach is based on the inequality correction procedure of~\cite{PLD-DR-JZK:21}, briefly described in Section~\ref{sec:problem}. 
The method involves first implementing the HT Algorithm 1 on the loss function~\eqref{eq:mloss-inequality}
ensuring that the equality constraints are met at all times.
Then we apply an additional update that
drives the decision variable towards the feasible region corresponding to the inequality constraints as well.
Let $\alpha>0$ be the stepsize and define $\rho: \real^n \to \real^n$ as
\begin{align}\label{eq:rho}
\rho \Bigg(
  \begin{bmatrix}
           \theta\\
           p(\theta) \\
  \end{bmatrix}
\Bigg)
=
  \begin{bmatrix}
           \theta-\alpha\Delta\theta \\
           p(\theta)-\alpha\Delta p(\theta) \\
  \end{bmatrix},
\end{align}
where
\begin{equation*}
\begin{aligned}
    \Delta \theta = \Bigg( \dfrac{d}{d \theta} \| \softp \bigg(g\bigg(
\begin{bmatrix}
           \theta \\
           p(\theta) \\
 \end{bmatrix}
\bigg) \bigg)\|^2 \Bigg)^T
\end{aligned}.
\end{equation*}
Note immediately that the inequality correction step above does not affect the feasibility with respect to the equality constraints. 
Hence by implementing the described method, we obtain Algorithm 2 that satisfies equality constraints at each step and moves closer towards satisfying the inequality constraints with each successive iteration.
\begin{algorithm}
    \caption{HT optimizer for equality + inequality constrained convex optimization}
    \begin{algorithmic}[1] 
    \STATE \textbf{Initial conditions} $x_0$, $\overline{x}_0$, $\nu_0$, gains $\alpha$, $\gamma$, $\beta$
    \FOR{k=0,1,2,...}
    \STATE Compute $\nabla l(\theta_k)$ and let \(\mathcal{N}_k=1+H_k\)
    \STATE $\nabla \overline{f}_k(\theta_k)= \dfrac{\nabla l(\theta_k)}{\mathcal{N}_k}$
    \STATE $\overline{\theta}_k=\theta_k-\gamma\beta\nabla \overline{f}_k(\theta_k)$
    \STATE ${\theta}_{k+1}\leftarrow \overline{\theta}_k-\beta(\overline{\theta}_k-\nu_k) $
    \STATE $\overline{x}_{k+1} = [\theta^T_{k+1} \;\;p(\theta_{k+1})^T]^T$
    \STATE Compute $x_{k+1} \leftarrow \rho(\overline{x}_{k+1})$
    \STATE Compute $\nabla l(\theta_{k+1})$ and let
    \STATE $\nabla \overline{f}_k
    (\theta_{k+1})= \dfrac{\nabla l(\theta_{k+1})}{\mathcal{N}_k}$
    \STATE $\nu_{k+1} \leftarrow \nu_k -\gamma\nabla \overline{q}_k(\theta_{k+1})$
    \ENDFOR
    \end{algorithmic}
    \end{algorithm}

It is reasonable to expect that if the hypotheses of Theorem~\ref{thm:equality-cvx} are satisfied and
$\alpha$ is properly selected, Algorithm~2 solves problem~\eqref{eq:problem-convex}.

\section{Convex Optimization for a Class of Nonconvex Problems}\label{sec:nonlinear}
In this section, we extend our approach of achieving accelerated convergence via high-order tuners to a class of nonconvex optimization problems.
As with the convex case, we start with problems involving just the equality constraints and then generalize our methodology to problems 
involving inequality constraints as well.
Throughout this section, we consider only the conditions under which the loss function is convex. 
The arguments could be generalized easily to the strongly convex case.

\subsection{Equality-constrained nonconvex problems}
Consider the optimization problem
\begin{equation}\label{eq:problem-equality-nl}
    \begin{aligned}
        \min \ &f(x) \\
        \text{s.t.} \enspace & h(x) = 0,
    \end{aligned}
\end{equation}
and the associated loss function
\begin{align}\label{eq:loss-equality-nl}
    \Lc(x)=f(x)+\lambda_h \|h(x)\|^2.
\end{align}
The following result whose proof is immediate and hence skipped, provides the conditions under which $\Lc$ is convex.
\begin{lemma}\label{lemma:cvx-nl}
\longthmtitle{Convexity of the loss function for equality-constrained nonconvex programs}
If $f$ and $h$ are convex, then $\Lc$ defined in~\eqref{eq:loss-equality-nl} is convex. 
\end{lemma}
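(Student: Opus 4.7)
The plan is to exploit the fact that convexity is preserved under nonnegative sums: since $\Lc(x) = f(x) + \lambda_h\|h(x)\|^2$ with $\lambda_h > 0$ and $f$ convex by hypothesis, it suffices to prove that the penalty term $\|h(x)\|^2$ is convex, since the sum of two convex functions scaled by nonnegative constants is convex.

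First I would expand $\|h(x)\|^2 = \sum_{i=1}^m h_i(x)^2$, which reduces the task to showing that each scalar function $x \mapsto h_i(x)^2$ is convex; a finite sum of convex functions is again convex. The key step then invokes the standard composition rule: if $\phi:\real\to\real$ is convex and nondecreasing on the range of a convex function $\psi$, then $\phi\circ\psi$ is convex. Applying this with $\phi(t)=t^2$ and $\psi=h_i$ gives what is wanted, provided the monotonicity hypothesis holds.

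Here is where the only real subtlety appears, and I expect it to be the main obstacle in stating the lemma cleanly: the squaring map is convex on all of $\real$ but monotone only on $[0,\infty)$. So "convex $h$" alone does not imply $\|h\|^2$ convex in full generality (e.g.\ the scalar $h(x)=x^2-1$ is convex but $h(x)^2$ is not). The intended reading is almost certainly one of the two standard settings in which the composition rule applies without issue: either (a) $h$ is affine, which is the usual hypothesis for convex equality-constrained programs since nonlinear equality constraints generically destroy convexity of the feasible set, in which case each $h_i^2$ is a convex function composed with an affine map and hence convex; or (b) each component $h_i$ is convex and pointwise nonnegative, so that $t\mapsto t^2$ is nondecreasing on its range. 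In either case the composition rule immediately yields convexity of $h_i^2$, and summing over $i$ and adding $f$ concludes that $\Lc$ is convex.

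Since the authors call the proof "immediate", the cleanest way to present it is a single line invoking preservation of convexity under sums and under composition with an affine (or appropriately monotone) map; I would write exactly that, flagging the monotonicity point as the only nontrivial consideration.
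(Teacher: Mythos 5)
The paper declares this proof ``immediate'' and omits it entirely, so there is no argument of record to compare yours against; your route (convexity preserved under nonnegative sums, plus the composition rule for $t \mapsto t^2$ applied to each component $h_i$) is the natural and presumably intended one. More importantly, you have correctly identified that the lemma as stated is actually false: convexity of $h$ alone does not make $\|h(\cdot)\|^2$ convex, because $t \mapsto t^2$ is nondecreasing only on $[0,\infty)$, and your counterexample $h(x) = x^2 - 1$ settles it (here $(x^2-1)^2$ has second derivative $12x^2-4 < 0$ for $|x| < 1/\sqrt{3}$). Either of your repairs works: if $h$ is affine, each $h_i^2$ is a convex function precomposed with an affine map and hence convex --- this is exactly the situation of Section~\ref{sec:convex}, where $h(x) = Ax-b$; if instead each $h_i$ is convex and nonnegative on the relevant set, the composition rule applies directly. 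Note, however, that where the lemma is actually invoked --- in the proof of Proposition~\ref{prop:cvx-nl}, with genuinely nonlinear equality constraints --- the affine repair is unavailable, so the lemma needs the extra hypothesis $h \ge 0$ on $\Omega_n$ (or an equivalent monotonicity condition) for the downstream convexity claim on $\Lc$ to hold. In short: your proof of the corrected statement is complete and careful; the gap you flagged lies in the paper's statement of the lemma, not in your argument.
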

The modified loss function in this case once again takes the form~\eqref{eq:mloss-equality},
albeit the functional form of $p$ would not be linear anymore.
As such, establishing the convexity of $l$ over the entire domain as in Proposition~\ref{prop:linear} may not be feasible anymore.
We therefore search for conditions under which the modified loss function $l$ is convex over some subset of the domain.
We summarize a set of such conditions in the following result.

\begin{proposition}\label{prop:cvx-nl}
\longthmtitle{Convexity of the modified loss function for equality-constrained nonconvex programs}
Assume that there exists a convex set $\Omega_n \in \real^n$ such that
the functions $f$ and $h$ are convex on $\Omega_n$.
Let 
\begin{align}\label{eq:omega-m}
    \Omega_m=\{\theta \: | \: \theta=x_{1:m}, x \in \Omega_n \}.
\end{align}
If either of the following conditions is satisfied:
\begin{enumerate}
    \item $\nabla \Lc(x) \ge 0$ for all $x \in \Omega_n$, and $p$ is convex on $\Omega_m$,
    \item $\nabla \Lc(x) \le 0$ for all $x \in \Omega_n$, and $p$ is concave on $\Omega_m$,
    \end{enumerate}
    then $l$ is convex on $\Omega_m$.
\end{proposition}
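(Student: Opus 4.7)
The plan is to reduce this to a standard vector composition of convex functions, building on Lemma~\ref{lemma:cvx-nl} for $\mathcal{L}$ and using the monotonicity/convexity hypotheses to push the inequality through the $p$-map. First, I would invoke (a restricted version of) Lemma~\ref{lemma:cvx-nl} to conclude that $\mathcal{L}(x)=f(x)+\lambda_h\|h(x)\|^2$ is convex on $\Omega_n$. Then, fixing $\theta_1,\theta_2\in\Omega_m$ and $\lambda\in(0,1)$, I would set $x_i=[\theta_i^T\;p(\theta_i)^T]^T\in\Omega_n$ and target the convex-combination inequality $l(\lambda\theta_1+(1-\lambda)\theta_2)\le\lambda l(\theta_1)+(1-\lambda)l(\theta_2)$ directly from the definition.

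The argument then factors into two inequalities. The easier one uses convexity of $\mathcal{L}$ at $x_1,x_2$: $\mathcal{L}(\lambda x_1+(1-\lambda)x_2)\le\lambda\mathcal{L}(x_1)+(1-\lambda)\mathcal{L}(x_2)=\lambda l(\theta_1)+(1-\lambda)l(\theta_2)$. The harder one compares the two evaluation points of $\mathcal{L}$ that agree on the first $m$ coordinates but differ on the last $n-m$: namely $p(\lambda\theta_1+(1-\lambda)\theta_2)$ versus $\lambda p(\theta_1)+(1-\lambda)p(\theta_2)$. In case (i), convexity of $p$ gives $p(\lambda\theta_1+(1-\lambda)\theta_2)\le\lambda p(\theta_1)+(1-\lambda)p(\theta_2)$ componentwise, while $\nabla\mathcal{L}\ge0$ on $\Omega_n$ implies $\mathcal{L}$ is coordinatewise nondecreasing; hence evaluating $\mathcal{L}$ at the smaller bottom block yields a smaller value, giving $l(\lambda\theta_1+(1-\lambda)\theta_2)\le\mathcal{L}(\lambda x_1+(1-\lambda)x_2)$. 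Chaining with the first inequality closes case (i). Case (ii) is identical after flipping both inequalities (concavity of $p$ combined with $\mathcal{L}$ coordinatewise nonincreasing).

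The main obstacle, and the step I would write out carefully, is the monotonicity step itself: deducing $\mathcal{L}(a)\le\mathcal{L}(b)$ from $a\le b$ and $\nabla\mathcal{L}\ge0$ requires the segment from $a$ to $b$ to lie in $\Omega_n$ so that the fundamental theorem of calculus $\mathcal{L}(b)-\mathcal{L}(a)=\int_0^1\nabla\mathcal{L}(a+t(b-a))^T(b-a)\,dt$ applies and both factors under the integral are nonnegative. I would therefore make explicit a mild geometric hypothesis on $\Omega_n$, namely that it is closed under replacing the last $n-m$ coordinates by any value between $p(\lambda\theta_1+(1-\lambda)\theta_2)$ and $\lambda p(\theta_1)+(1-\lambda)p(\theta_2)$ (a coordinate-convexity property that holds, for instance, when $\Omega_n$ is a product of convex sets, or more generally a sublevel set compatible with the problem at hand). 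Apart from this bookkeeping, the remainder of the proof is a direct application of the scalar composition-of-convex-functions rule in the spirit of~\cite{Nesterov2018LecturesOptimization}, and the second part of the proposition follows by a symmetric sign flip.
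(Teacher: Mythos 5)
Your proof follows the same route as the paper's own: the componentwise inequality between $p(\lambda\theta_1+(1-\lambda)\theta_2)$ and $\lambda p(\theta_1)+(1-\lambda)p(\theta_2)$ from convexity of $p$, then monotonicity of $\Lc$ from the sign condition on $\nabla\Lc$, then convexity of $\Lc$ via Lemma~\ref{lemma:cvx-nl}, chained in exactly the same order. The one place you go beyond the paper is the monotonicity step: the paper asserts $\Lc(a)\le\Lc(b)$ from $a\le b$ without checking that the segment joining the two points lies in $\Omega_n$ (the only region where $\nabla\Lc\ge 0$ is guaranteed, and where the intermediate point $[(\lambda\theta_1+(1-\lambda)\theta_2)^T \; p(\lambda\theta_1+(1-\lambda)\theta_2)^T]^T$ is not even guaranteed to live), and the coordinate-convexity hypothesis you make explicit is precisely the bookkeeping needed to make that step rigorous.
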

\begin{proof} 
We present here the arguments for only condition~(i). The ensuing treatment easily generalizes to condition~(ii).
It is immediate to see that $\Omega_m$ is convex. 
Consider $\theta_1, \theta_2 \in \Omega_m$. 
Since $p(\theta)$ is convex, we have
\begin{equation*}
    p(\lambda \theta_1 + (1-\lambda)\theta_2) \leq \lambda p(\theta_1) + (1-\lambda)p(\theta_2),
\end{equation*}
and it follows that
\begin{equation*}
    \begin{bmatrix}
      \lambda \theta_1 + (1-\lambda)\theta_2   \\
        p(\lambda \theta_1 + (1-\lambda)\theta_2)
    \end{bmatrix}
    \leq 
    \lambda 
    \begin{bmatrix}
        \theta_1 \\
        p(\theta_1)
    \end{bmatrix} + \
    (1-\lambda)
    \begin{bmatrix}
        \theta_2 \\
        p(\theta_2)
    \end{bmatrix}.
\end{equation*}
Since $\mathcal{L}$ is nondecreasing, it follows that
\begin{align} \notag
    \mathcal{L}& \Bigg( \begin{bmatrix}
      \lambda \theta_1 + (1-\lambda)\theta_2   \\
        p(\lambda \theta_1 + (1-\lambda)\theta_2)
    \end{bmatrix} \Bigg) \\
    \leq &
    \mathcal{L} \Bigg(
    \lambda 
    \begin{bmatrix}
        \theta_1 \\
        p(\theta_1)
    \end{bmatrix} + \
    (1-\lambda)
    \begin{bmatrix}
        \theta_2 \\
        p(\theta_2)
    \end{bmatrix} \Bigg). \label{eq:L-nondecreasing}
\end{align}
Moreover, it follows from Lemma~\ref{lemma:cvx-nl} that $\Lc$ is convex, and we have
\begin{align}\notag
    \mathcal{L}& \Bigg(
    \lambda 
    \begin{bmatrix}
        \theta_1 \\
        p(\theta_1)
    \end{bmatrix} + \
    (1-\lambda) 
    \begin{bmatrix}
        \theta_2 \\
        p(\theta_2)
    \end{bmatrix} \Bigg) \\
    \leq &
    \lambda \mathcal{L}\Bigg(
    \begin{bmatrix}
        \theta_1 \\
        p(\theta_1)
    \end{bmatrix}\Bigg) + 
    (1-\lambda) \mathcal{L}\Bigg(
    \begin{bmatrix}
        \theta_2 \\
        p(\theta_2)
    \end{bmatrix}\Bigg). \label{eq:L-cvx}
\end{align} 
Combining the inequalities~\eqref{eq:L-nondecreasing} and~\eqref{eq:L-cvx}, we get 
\begin{align*}
    \mathcal{L}&\Bigg(
    \begin{matrix}
        \lambda \theta_1 + (1-\lambda) \theta_2 \\
        p(\lambda \theta_1 + (1-\lambda) \theta_2)
    \end{matrix}\Bigg) \\
    \leq &
    \lambda \mathcal{L}\Bigg(
    \begin{bmatrix}
        \theta_1 \\
        p(\theta_1)
    \end{bmatrix}\Bigg) + 
    (1-\lambda) \mathcal{L}\Bigg(
    \begin{bmatrix}
        \theta_2 \\
        p(\theta_2)
    \end{bmatrix}\Bigg).
\end{align*}
And from the definition of the modified loss function, it follows that
\begin{equation*}
    l(\lambda \theta_1 + (1-\lambda)\theta_2) \leq \lambda l(\theta_1) + (1-\lambda) l(\theta_2),
\end{equation*}
completing the proof.
\end{proof}

Now that we have established sufficient conditions for the convexity of the modified loss function, we can use high-order tuners to find an optimizer of~\eqref{eq:problem-equality-nl}.
In fact, if the sequence of iterates lie within the set $\Omega_n$, 
then we can use Algorithm 1, stated earlier for convex programs, to find a solution of~\eqref{eq:problem-equality-nl}. The following result formalizes this.

\begin{theorem}\label{thm:nl}
\longthmtitle{Convergence of the HT algorithm for equality-constrained nonconvex programs}
If the objective function $f$ and the equality constraint $h$ are convex over a set $\Omega_n$,
In addition, with \(0<\beta<1\), \(0<\gamma<\frac{\beta(2-\beta)}{8+\beta}\), and $\theta_0 \in \Omega_m$, 
where $\Omega_m$ is defined in~\eqref{eq:omega-m},
if the sequence of iterates $\{\theta_k\}$ generated by Algorithm 1 satisfy $\{\theta_k\} \in \Omega_m$,
then $\underset{k\rightarrow\infty}{\lim}l(\theta_k) = l(\theta^*)$, 
where $l(\theta^*)=f([\theta^{*T} \quad p(\theta^*)^T]^T)$ is the optimal value of~\eqref{eq:problem-equality-nl}. 
\end{theorem}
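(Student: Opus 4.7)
My plan is to reduce the statement to the convex convergence result already obtained in Theorem~\ref{thm:equality-cvx}, by exploiting the invariance assumption $\{\theta_k\}\subset \Omega_m$ to restrict all analysis to the set on which the modified loss is convex. The three ingredients I would chain together are: Lemma~\ref{lemma:cvx-nl} (convexity of $\mathcal{L}$), Proposition~\ref{prop:cvx-nl} (convexity of $l$ on $\Omega_m$), and the HT convergence result from~\cite[Theorem~2]{Jose2021-LCSS} that was already invoked in the proof of Theorem~\ref{thm:equality-cvx}.

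First, I would apply Lemma~\ref{lemma:cvx-nl} to conclude that $\mathcal{L}$ is convex on $\Omega_n$, and then Proposition~\ref{prop:cvx-nl} (under whichever of its two sign/curvature conditions is inherited by the hypotheses of the theorem) to conclude that the modified loss $l$ is convex on $\Omega_m$. Next I would establish the smoothness of $l$ restricted to $\Omega_m$: mimicking the chain-rule calculation in the proof of Proposition~\ref{prop:linear}, but with the constant matrix $P$ replaced by the Jacobian $\tfrac{\partial p}{\partial \theta}(\theta)$ guaranteed by the Implicit Function theorem, one obtains a bound of the form $\|\nabla l(\theta_1)-\nabla l(\theta_2)\|\le \overline{M}\,\|\theta_1-\theta_2\|$ for a suitable $\overline{M}$ derived from a uniform bound on $\|\tfrac{\partial p}{\partial \theta}\|$ over $\Omega_m$ and the local Lipschitz constant of $\nabla f$ on $\Omega_n$. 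This step is the direct nonlinear analogue of equations~\eqref{eq:smooth-first}--\eqref{eq:theta-x-norm}.

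With $l$ convex and $\overline{M}$-smooth on the convex set $\Omega_m$, and with the hypothesis that the iterates $\{\theta_k\}$ produced by Algorithm~1 never leave $\Omega_m$, the analysis carried out in~\cite[Theorem~2]{Jose2021-LCSS} applies verbatim: the normalizing signal $\mathcal{N}_k=1+H_k$ controls the local curvature encountered by the tuner, and the Lyapunov argument used there delivers $\lim_{k\to\infty} l(\theta_k)=l(\theta^*)$ under the stated gain conditions $0<\beta<1$, $0<\gamma<\beta(2-\beta)/(8+\beta)$. The initialization $\theta_0\in\Omega_m$ simply guarantees that the base point of the Lyapunov function sits inside the region where convexity has been established.

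The main obstacle is the forward-invariance assumption $\{\theta_k\}\subset \Omega_m$, which the theorem takes as a hypothesis rather than deriving. In a fully self-contained argument one would need to either verify invariance a posteriori (e.g., by monitoring $H_k$ and the iterate trajectory numerically) or impose additional structural conditions on $h$, $f$, and $\Omega_n$ under which the HT update together with the completion map $\theta\mapsto[\theta^T\ p(\theta)^T]^T$ maps $\Omega_m$ into itself. A secondary, more routine obstacle is the passage from the constant bound $\|P\|$ of the linear case to a uniform bound on $\|\partial p/\partial \theta\|$ over $\Omega_m$, which requires continuity of the implicit map $p$ together with compactness or boundedness of $\Omega_m$.
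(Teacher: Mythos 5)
Your proposal takes essentially the same route as the paper's proof: the paper likewise invokes Proposition~\ref{prop:cvx-nl} for convexity of $l$ on $\Omega_m$, asserts the existence of a Lipschitz constant $\overline{S}$ for $\nabla l$ on $\Omega_m$ (without the explicit Jacobian-of-$p$ derivation you sketch), and concludes via \cite[Theorem~2]{Jose2021-LCSS}. Your added remarks---that the uniform bound on $\|\partial p/\partial \theta\|$ needs justification and that forward invariance of $\Omega_m$ is a hypothesis rather than a derived property---are accurate but do not alter the argument.
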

\begin{proof}
Convexity of the loss function $l$ follows from Proposition~\ref{prop:cvx-nl}.
Moreover, since $\{\theta_k\} \in \Omega_m$ for all $k$, there exists a constant $\overline{S}$ such that
\begin{align*}
    \dfrac{\|\nabla l(\theta_1) - \nabla l (\theta_2)\|}{\|\theta_1 - \theta_2\|} \le \overline{S},
\end{align*}
for all $\theta_1, \theta_2 \in \Omega_m$.
Rest of the proof follows from~\cite[Theorem 2]{Jose2021-LCSS}.
\end{proof}
Theorem~\ref{thm:nl} enables us to leverage Algorithm 1, provided that the state remains inside the set over which the modified loss function is convex.
It is reasonable to argue that this is always not the case.
To overcome this assumption, we use the projection operator defined as
\begin{equation*}
    \proj(\tilde{\theta}) = \text{argmin} \| \tilde{\theta} - \theta \|, \quad \forall \theta \in \Omega_m
\end{equation*}
to make sure that the state remains inside the set $\Omega_m$.
Algorithm~3 states this concisely.
\begin{algorithm}
\caption{HT Optimizer for equality-constrained nonconvex optimization}
\begin{algorithmic}[1] 
\STATE \textbf{Initial conditions} $\theta_0$, $\nu_0$, gains $\gamma$, $\beta$
\STATE $\theta_0 \leftarrow \proj(\theta_0)$
\FOR{$k=1$ to $N$}
\STATE Compute $\nabla l(\theta)$ and let \(\mathcal{N}_k=1+H_k\)
\STATE $\nabla \overline{q}_k(\theta_k)= \dfrac{\nabla l(\theta_k)}{\mathcal{N}_k}$
\STATE $\overline{\theta}_k=\theta_k-\gamma\beta\nabla \overline{q}_k(\theta_k)$
\STATE $\theta_{k+1}\leftarrow \proj (\overline{\theta}_k-\beta(\overline{\theta}_k-\nu_k)) $
\STATE Compute $\nabla l(\theta_{k+1})$ and let
\STATE $\nabla \overline{q}_k(\theta_{k+1})= \dfrac{\nabla l(\theta_{k+1})}{\mathcal{N}_k}$
\STATE $\nu_{k+1} \leftarrow \nu_k -\gamma\nabla \overline{q}_k(\theta_{k+1})$
\ENDFOR
\end{algorithmic}
\end{algorithm}

The arguments of this section show how the proposed approach could be 
applied to solve nonconvex problems,
where a convex objective function needs to be optimized with respect to
nonlinear convex equality constraints.
Moreover, the proposed method ensures that the state remains feasible throughout the
evolution, regardless of the convexity of the original problem.

\subsection{Nonconvex optimization problems with equality \& inequality constraints}
Here we employ the correction procedure described in Section~\ref{sec:problem} to extend our approach to solve problems of the form~\eqref{eq:problem},
which includes inequality constraints in addition to equality constraints.
As in the convex case, let us define the function $\rho$ as in~\eqref{eq:rho} and consider the following algorithm,
obtained from appending Algorithm~3 with a suitable inequality correction step.
\begin{algorithm}
    \caption{HT optimizer for equality + inequality constrained nonconvex optimization}
    \begin{algorithmic}[1] 
    \STATE \textbf{Initial conditions} $x_0$, $\overline{x}_0$, $\nu_0$, gains $\alpha$, $\gamma$, $\beta$
    \STATE $x_0 \leftarrow \projx(x_0)$
    \FOR{k=0,1,2,...}
    \STATE Compute $\nabla l(\theta_k)$ and let \(\mathcal{N}_k=1+H_k\)
    \STATE $\nabla \overline{f}_k(\theta_k)= \dfrac{\nabla l(\theta_k)}{\mathcal{N}_k}$
    \STATE $\overline{\theta}_k=\theta_k-\gamma\beta\nabla \overline{f}_k(\theta_k)$
    \STATE ${\theta}_{k+1}\leftarrow \overline{\theta}_k-\beta(\overline{\theta}_k-\nu_k) $
    \STATE $\overline{x}_{k+1} = [\theta^T_{k+1} \;\;p(\theta_{k+1})^T]^T$
    \STATE Compute $x_{k+1} \leftarrow \projx(\rho(\overline{x}_{k+1}))$
    \STATE Compute $\nabla l(\theta_{k+1})$ and let
    \STATE $\nabla \overline{f}_k
    (\theta_{k+1})= \dfrac{\nabla l(\theta_{k+1})}{\mathcal{N}_k}$
    \STATE $\nu_{k+1} \leftarrow \nu_k -\gamma\nabla \overline{q}_k(\theta_{k+1})$
    \ENDFOR
    \end{algorithmic}
    \end{algorithm}

If the hypotheses of Theorem~\ref{thm:nl} are satisfied, then with an appropriate value of $\alpha$ introduced in~\eqref{eq:rho}, we can use Algorithm~4 to find a solution of~\eqref{eq:problem}, without the explicit assumption of states belonging to the set $\Omega_n$.

\section{Conclusions and Future Work}
\label{sec:conclusions}
We have presented accelerated algorithms based on high-order tuners for solving constrained convex optimization problems.
Our approach is based on identifying the conditions under which the reformulated loss function is convex, guarantees that the
equality constraints are satisfied at all times,
and is also applicable to a class of nonconvex optimization problems.
Future work will involve formally characterizing the rate of convergence and extending our approach to a broader class of nonconvex problems.

%
\bibliographystyle{IEEEtran}
\bibliography{alias.bib,main.bib}
\end{document}